\numberwithin{equation}{section}
\title[Power-free values of the polynomial $t_1 \cdots t_r - 1$]{Power-free values of the polynomial $t_1 \cdots t_r - 1$}
\author{Pierre Le Boudec}
\subjclass{$11$N$32$}
\keywords{Power-free values of polynomials, equidistribution in arithmetic progressions, Kloosterman sums}
\address{Université Denis Diderot (Paris VII) \\ Institut de Mathématiques de Jussieu \\ UMR 7586 \\ Case $7012$ - Bâtiment Chevaleret \\ Bureau $7$C$14$ \\ $75205$ Paris Cedex 13, France}
\email{pleboude@math.jussieu.fr}
\begin{document}

\makeatletter
\def\imod#1{\allowbreak\mkern10mu({\operator@font mod}\,\,#1)}
\makeatother

\newtheorem{lemma}{Lemma}
\newtheorem{theorem}{Theorem}
\newtheorem{corollaire}{Corollary}
\newtheorem{proposition}{Proposition}

\newcommand{\vol}{\operatorname{vol}}
\newcommand{\D}{\mathrm{d}}
\newcommand{\rank}{\operatorname{rank}}
\newcommand{\Pic}{\operatorname{Pic}}
\newcommand{\Gal}{\operatorname{Gal}}
\newcommand{\meas}{\operatorname{meas}}
\newcommand{\Spec}{\operatorname{Spec}}

\begin{abstract}
Let $k,r \geq 2$ be two integers. We prove an asymptotic formula for the number of $k$-free values of the $r$ variables polynomial $t_1 \cdots t_r - 1$ over $[1,x]^r \cap \mathbb{Z}^r$.
\end{abstract}

\maketitle

\section{Introduction}

Let $k,r \geq 2$ be two integers. Given a multivariable polynomial $P(t_1, \dots, t_r)$, a natural problem is to investigate the number of its $k$-free values over $[1,x]^r \cap \mathbb{Z}^r$. This problem being very hard, we can content ourselves with asking about the asymptotic behaviour of this number as $x$ grows to infinity.

Many authors have studied this problem for polynomials in few variables (see \cite{MR2773215} for a recent overview) but there are no general results for multivariable polynomials. However, using a result of Granville \cite{MR1654759} for polynomials in one variable, Poonen \cite{MR1980998} has proved under the $abc$ conjecture that the number of squarefree values of any multivariable polynomial $P(t_1, \dots, t_r)$ over the set $[1,x]^r \cap \mathbb{Z}^r$ divided by $x^r$ converges to a product of local densities. This proves conditionally that the number of squarefree values of any polynomial behaves quite nicely.

The purpose of this short paper is to attack this problem for the polynomial $t_1 \cdots t_r - 1$ and to take advantage of its particular shape to investigate deeper the asymptotic behaviour of the number $\mathcal{N}_{k,r}(x)$ of its $k$-free values over
$[1,x]^r \cap \mathbb{Z}^r$.

In our investigation, a trichotomy appears depending on the value of $r$. For $r = 2$, we will make use of Weil's bound for Kloosterman sums and for $r = 3$, we will use the work of Heath-Brown about the equidistribution of the values of the divisor function
$\tau_3 := 1 \ast 1 \ast 1$ in arithmetic progressions \cite{MR866901} (improving the earlier result of Friedlander and Iwaniec \cite{MR786351}). Finally, for $r \geq 4$, the most efficient way to tackle the problem is to use a result of Shparlinski \cite[Theorem $9$]{MR2303621}.

Let $\gamma_2 = 4/3$, $\gamma_3 = 8/5$, $\gamma_4 = 2$, $\gamma_5 = 40/19$ and, for $r \geq 6$,
\begin{eqnarray*}
\gamma_r & = & 3 \left( 1 - \frac{3}{r+5} \right) \textrm{,}
\end{eqnarray*}
and let
\begin{eqnarray*}
\delta_{k,r} & = & \left( 1 - \frac1{k} \right) \gamma_r \textrm{.}
\end{eqnarray*}
Our main result is the following.

\begin{theorem}
\label{Main}
Let $\varepsilon > 0$ be fixed. As $x$ tends to $+ \infty$, if $\delta_{k,r} \leq 1$ we have the estimate
\begin{eqnarray*}
\mathcal{N}_{k,r}(x) & = & c_{k,r} x^r + O \left( x^{r - \delta_{k,r} + \varepsilon} \right) \textrm{,}
\end{eqnarray*}
where
\begin{eqnarray*}
c_{k,r} & = & \prod_p \left( 1 - \frac1{p^k} \left( 1 - \frac1{p} \right)^{r-1} \right) \textrm{,}
\end{eqnarray*}
and if $1 < \delta_{k,r} \leq 2$ we have the estimate
\begin{eqnarray*}
\mathcal{N}_{k,r}(x) & = & c_{k,r} x^r - \theta_{k,r}^{(1)}(x) x^{r-1} + O \left( x^{r - \delta_{k,r} + \varepsilon} \right) \textrm{,}
\end{eqnarray*}
where
\begin{eqnarray*}
\theta_{k,r}^{(1)}(x) & = & r \sum_{d = 1}^{+ \infty} \frac{\mu(d)}{\varphi(d^k)} \left( \frac{\varphi(d)}{d} \right)^{r-1}
\sum_{m|d} \mu(m) \left\{ \frac{x}{m} \right\} \textrm{,}
\end{eqnarray*}
and, finally, if $\delta_{k,r} > 2$ we have the estimate
\begin{eqnarray*}
\mathcal{N}_{k,r}(x) & = & c_{k,r} x^r - \theta_{k,r}^{(1)}(x) x^{r-1} + \theta_{k,r}^{(2)}(x) x^{r-2}
+ O \left( x^{r - \delta_{k,r} + \varepsilon} \right) \textrm{,}
\end{eqnarray*}
where
\begin{eqnarray*}
\theta_{k,r}^{(2)}(x) & = &  \frac{r(r-1)}{2} \sum_{d = 1}^{+ \infty} \frac{\mu(d)}{\varphi(d^k)} \left( \frac{\varphi(d)}{d} \right)^{r-2} \left( \sum_{m|d} \mu(m) \left\{ \frac{x}{m} \right\} \right)^2 \textrm{.}
\end{eqnarray*}
\end{theorem}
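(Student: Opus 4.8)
\emph{The reduction.} The plan is to detect $k$-free integers by the convolution identity $\mathbf{1}_{k\text{-free}}(n) = \sum_{d^k \mid n} \mu(d)$ and to interchange the order of summation. Writing
\[
N_d(x) := \#\{\mathbf{t} \in [1,x]^r \cap \mathbb{Z}^r : t_1 \cdots t_r \equiv 1 \imod{d^k}\} \textrm{,}
\]
one gets $\mathcal{N}_{k,r}(x) = \sum_{1 \le d \le x^{r/k}} \mu(d) N_d(x)$, the sum being finite since a nonzero value satisfies $d^k \le t_1 \cdots t_r - 1 < x^r$ (the single tuple giving the value $0$ contributes $O(1)$). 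Everything is thus reduced to understanding, uniformly in $d$, the number of solutions of the multiplicative congruence $t_1 \cdots t_r \equiv 1 \pmod{d^k}$ in the box $[1,x]^r$.

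\emph{Extracting the main terms.} Next I would count $N_d(x)$ by grouping the $t_i$ into residue classes modulo $q := d^k$. For a unit $a$ one has $\#\{t \le x : t \equiv a \imod q\} = x/q + e_q(a)$ with $e_q$ bounded, so
\[
N_d(x) = \sum_{\substack{\mathbf{a} \bmod q \\ a_1 \cdots a_r \equiv 1}} \prod_{i=1}^r \left( \frac{x}{q} + e_q(a_i) \right) \textrm{.}
\]
Expanding and collecting the elementary symmetric terms of each degree $j$, the key point is that for $j \le r-1$ the $j$ chosen coordinates range freely over $(\mathbb{Z}/q\mathbb{Z})^\times$ while the remaining ones can always be completed in $\varphi(q)^{r-1-j}$ ways, so that the degree-$j$ term factorises as $\binom{r}{j} x^{r-j} \varphi(q)^{-1} (\varphi(q)/q)^{r-j} \big( \sum_{a} e_q(a) \big)^j$, where $\sum_{a \in (\mathbb{Z}/q\mathbb{Z})^\times} e_q(a) = -\sum_{m \mid d} \mu(m) \{x/m\}$ is obtained by comparing $\#\{t \le x : \gcd(t,d) = 1\}$ with its main term. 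Only the top degree $j = r$ does not factorise and forms the genuine error
\[
\mathcal{E}_d(x) := \sum_{\substack{\mathbf{a} \bmod q \\ a_1 \cdots a_r \equiv 1}} \prod_{i=1}^r e_q(a_i) \textrm{.}
\]
Using $\varphi(d^k) = d^{k-1}\varphi(d)$ one checks that the factorising degrees $j = 0,1,2$ contribute, after summation over $d$, precisely $c_{k,r} x^r$, $-\theta_{k,r}^{(1)}(x) x^{r-1}$ and $\theta_{k,r}^{(2)}(x) x^{r-2}$ (the relevant Dirichlet series converging absolutely for $k \ge 2$; and whenever the last of these lies above the error one has $\delta_{k,r} > 2$, hence $r \ge 5$ and $j = 2 \le r-1$), while the degrees $j \ge 3$ are $O(x^{r-3})$. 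Since $\delta_{k,r} < \gamma_r < 3$, these --- together with the tails $d > x^{r/k}$ of the three extracted series --- are absorbed into $O(x^{r - \delta_{k,r} + \varepsilon})$; the trichotomy in the statement simply records which terms are of order $r - j > r - \delta_{k,r}$.

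\emph{Bounding the error.} The heart of the proof is to bound $\sum_d \mu(d) \mathcal{E}_d(x)$. I would split the range at $y := x^{\delta_{k,r}/(k-1)}$. For $d > y$ the contribution is estimated by reverting to $N_d$ and its main part separately: bounding the number of representations $t_1 \cdots t_r = n$ by $\tau_r(n) \ll x^\varepsilon$ gives $\sum_{y < d \le x^{r/k}} N_d(x) \ll x^\varepsilon ( x^r/y^{k-1} + x^{r/k} )$, and the choice of $y$ together with $\gamma_r < r$ makes this $O(x^{r - \delta_{k,r} + \varepsilon})$. For $d \le y$ I would Fourier-expand each $e_q(a_i)$ modulo $q$; the congruence $a_1 \cdots a_r \equiv 1$ then turns $\mathcal{E}_d(x)$ into a weighted average of the complete multiplicative sums $\sum_{a_1 \cdots a_r \equiv 1} e( (h_1 a_1 + \cdots + h_r a_r)/q )$, whose cancellation is exactly what the cited results supply: Weil's bound for the Kloosterman sums arising when $r = 2$, Heath-Brown's equidistribution of $\tau_3$ for $r = 3$, and Shparlinski's Theorem $9$ for $r \ge 4$. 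The Fourier coefficients of the sawtooth $e_q$ decaying like $1/|h|$, the frequency sums converge up to logarithmic factors, and the bound obtained for $\sum_{d \le y} \mathcal{E}_d(x)$ matches $x^{r - \delta_{k,r} + \varepsilon}$ at the chosen value of $y$; this balancing is exactly what pins down $\gamma_r$ (for $r = 2$, for instance, Weil's exponent $q^{1/2}$ summed over $d \le y$ and balanced against $x^{2-\delta_{k,r}}$ returns $\delta_{k,r} = \tfrac43(1 - 1/k)$).

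\emph{The main obstacle.} I expect the crux to be precisely this last step: obtaining the cancellation in the multiplicative congruence sums with enough uniformity in $d$, that is, a level of distribution strong enough for the two ranges $d \le y$ and $d > y$ to meet at the exponent $\delta_{k,r}$. This is where the three genuinely different deep inputs become unavoidable, and why the quality of the error term --- hence the precise shape of $\gamma_r$ --- depends so delicately on $r$. A subordinate, more bookkeeping difficulty is to control, uniformly in $x$, the convergence of the arithmetic series defining $\theta_{k,r}^{(1)}$ and $\theta_{k,r}^{(2)}$, and to verify that their tails, along with all degree-$j$ terms having $j \ge 3$, are genuinely of size $O(x^{r - \delta_{k,r} + \varepsilon})$.
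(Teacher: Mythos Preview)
Your proposal is essentially the paper's argument, organised in reverse: you first expand $N_d(x)$ into the factorising degrees $j\le r-1$ and isolate the non-factorising remainder $\mathcal{E}_d(x)$, whereas the paper first applies its equidistribution lemmas to replace $N_d(x)$ by $\varphi(d^k)^{-1}\bigl(\#\{t\le x:\gcd(t,d)=1\}\bigr)^r$ and only then expands. The two routes are equivalent (the errors differ by the harmless $j=r$ term of the expansion), and the split at $y$, the $\tau_r$-bound for large $d$, the tail estimates, and the final balancing all match.

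One point to correct: you describe the bounding of $\mathcal{E}_d(x)$ as proceeding, for every $r$, by Fourier-expanding your sawtooth errors and then bounding the resulting complete sums $\sum_{a_1\cdots a_r\equiv1} e\bigl((h_1a_1+\cdots+h_ra_r)/q\bigr)$. That is indeed how $r=2$ goes (Weil for Kloosterman sums), but for $r\ge3$ it is \emph{not} how the inputs are applied, nor would it yield the stated exponents. Heath-Brown's result and Shparlinski's Theorem~9 are direct equidistribution estimates for the incomplete count $\mathcal{S}_r(x;q,a)$ itself: the former via a dyadic dissection of the box, the latter via Burgess-type bounds on multiplicative (not additive) character sums. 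They should therefore be invoked directly on $\mathcal{E}_d(x)$, which equals $\mathcal{S}_r(x;d^k,1)-\varphi(d^k)^{-1}\sum_{\gcd=1}1$ up to a negligible term. If instead you literally Fourier-expanded and used the Weil--Deligne bound $\ll q^{(r-1)/2+\varepsilon}$ on the hyper-Kloosterman sums for all $r$, the balancing would give only $\gamma_r=2r/(r+1)$ --- for instance $3/2$ when $r=3$ and $8/5$ when $r=4$ --- strictly weaker than the claimed $8/5$ and $2$.
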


The interest of theorem \ref{Main} lies more in the quality of the error term coming from the strength of the various results used rather than in the main term which is no surprise. Indeed, the constant $c_{k,r}$ has the following interpretation. For $q \geq 1$, let us denote by $\rho_r(q)$ the number of solutions to the equation $t_1 \cdots t_r - 1 = 0$ in
$\left( \mathbb{Z} / q \mathbb{Z} \right) ^r$, namely
\begin{eqnarray*}
\rho_r(q) & = & \# \{1 \leq t_1, \dots, t_r \leq n, \ t_1 \cdots t_r - 1 \equiv 0 \imod{q} \} \textrm{.}
\end{eqnarray*}
Clearly, $\rho_r(q) = \varphi(q)^{r-1}$ and we thus have
\begin{eqnarray*}
c_{k,r} & = & \prod_p \left( 1 - \frac{\rho_r(p^k)}{p^{kr}} \right) \textrm{.}
\end{eqnarray*}
Therefore, $c_{k,r}$ is actually a product of local densities.

It is worth pointing out that $\delta_{k,r} \leq 1$ if and only if $r = 2$ and $k = 2, 3, 4$ or $r = 3, 4$ and $k = 2$. Note that for the hardest case, namely $(k,r) = (2,2)$, we obtain
\begin{eqnarray*}
\mathcal{N}_{2,2}(x) & = &  c_{2,2} x^2 + O \left( x^{4/3 + \varepsilon} \right) \textrm{,}
\end{eqnarray*}
which is the result forecast by Tolev in \cite[Section $2$]{Tolev} where he proves a completely similar result, that is to say with the same error term, for squarefree values of the polynomial $t_1^2 + t_2^2 +1$.

In addition, it is straightforward to check that $\theta_{k,r}^{(1)}$ and $\theta_{k,r}^{(2)}$ are bounded functions. Furthermore, a short calculation yields
\begin{eqnarray*}
\theta_{k,r}^{(1)}(x) & = & r \sum_{m = 1}^{+ \infty} \left( \frac{|\mu(m)|}{\varphi(m^k)} \left( \frac{\varphi(m)}{m} \right)^{r-1}
\prod_{p \nmid m} \left( 1 - \frac1{p^k} \left( 1 - \frac1{p} \right)^{r-2} \right) \right)
\left\{ \frac{x}{m} \right\} \textrm{,}
\end{eqnarray*}
and thus $\theta_{k,r}^{(1)}$ is seen to be positive and, for instance, we have the bound
\begin{eqnarray*}
\theta_{k,r}^{(1)}(x) & \leq & r \frac{\zeta(k)}{\zeta(2k)} \textrm{.}
\end{eqnarray*}
It is a quite interesting fact to notice that the term $- \theta_{k,r}^{(1)}(x) x^{r-1}$ is thus a correcting term whose presence can be explained by the fact that if $n$ is a positive integer then the function $\mathcal{N}_{k,r}(x)$ is constant over the range
$[n, n+1[$. However, this does not mean that this term vanishes if we consider only integral values of $x$.

The following section is dedicated to the investigation of a quantity which will naturally appear in the main term of $\mathcal{N}_{k,r}(x)$ in the proof of theorem \ref{Main} and the last section is devoted to the proof of the theorem properly.

Along the proof, $\varepsilon$ is an arbitrary small positive number and, as a convention, the implicit constants involved in the notations $O$ and $\ll$ are allowed to depend on $k$, $r$ and $\varepsilon$. In addition, $\varphi$ denotes Euler's totient function, $\mu$ the Möbius function and $\{ \ \}$ and $\lfloor \ \rfloor$ respectively the fractional part and the floor part functions.

It is a great pleasure for the author to thank Professor de la Bretèche and Professor Browning for their careful reading of earlier versions of the manuscript and for their useful advice. The author is also extremely grateful to Professor Shparlinski for drawing his attention to the fact that making use of sums of multiplicative characters rather than Kloosterman sums is more efficient for
$r \geq 4$. This change of strategy has yielded a significant improvement of the main result in this case.

Part of this work was done while the author was attending the Initial Instructional Workshop of the semester GANT organized by the Centre Interfacultaire Bernoulli at the \'Ecole Polytechnique Fédérale de Lausanne. The hospitality and the financial support of this institution are gratefully acknowledged.

\section{Preliminary lemmas}

\label{Preliminary section}

Let $r \geq 2$. For $a,q \geq 1$ two coprime integers and $x \geq 1$, we introduce the quantity
\begin{eqnarray*}
\mathcal{S}_r(x;q,a) & = & \sum_{\substack{1 \leq t_1, \dots, t_r \leq x \\ t_1 \cdots t_r \equiv a \imod{q}}} 1 \textrm{.}
\end{eqnarray*}
This section is devoted to giving several different estimates for $\mathcal{S}_r(x;q,a)$ which will be used in the proof of theorem \ref{Main} in the following section. These estimates consist in proving that for fixed $q \geq 1$ and for varying $1 \leq a \leq q-1$ coprime to $q$, the quantities $\mathcal{S}_r(x;q,a)$ have a similar asymptotic behaviour. In other words, these estimates are equidistribution results.

The first of these estimates is proved by making use of Kloosterman sums and will actually be used only in the case $r = 2$. The second estimate deals with the case $r = 3$ and uses the work of Heath-Brown \cite{MR866901}. Finally, the third estimate is a result of Shparlinski and is only concerned with the case $r \geq 4$.

Even though we make use of the following estimate only in the case $r = 2$, we prove it for any $r \geq 2$ since this does not require much more effort.

\begin{lemma}
\label{preliminary}
Let $r \geq 2$ and $\varepsilon > 0$ be fixed. For $a, q \geq 1$ two coprime integers and $x \geq 1$, we have the estimate
\begin{eqnarray*}
\mathcal{S}_r(x;q,a) & = & \frac1{\varphi(q)} \sum_{\substack{1 \leq t_1, \dots, t_r \leq x \\ \gcd(t_1 \cdots t_r,q) = 1}} 1 + O \left( q^{(r-1)/2 + \varepsilon} \right) \textrm{.}
\end{eqnarray*}
\end{lemma}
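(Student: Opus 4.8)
The plan is to detect the congruence $t_1\cdots t_r\equiv a\imod q$ by orthogonality of the Dirichlet characters modulo $q$. Writing $S_\chi(x)=\sum_{1\le t\le x}\chi(t)$ and using that $\chi$ is completely multiplicative, one obtains
\begin{eqnarray*}
\mathcal{S}_r(x;q,a) & = & \frac1{\varphi(q)}\sum_{\chi\bmod q}\overline{\chi(a)}\,S_\chi(x)^r \textrm{.}
\end{eqnarray*}
The principal character contributes $\frac1{\varphi(q)}S_{\chi_0}(x)^r=\frac1{\varphi(q)}\#\{1\le t_1,\dots,t_r\le x:\gcd(t_1\cdots t_r,q)=1\}$, which is exactly the main term of the lemma, so it remains to bound $\frac1{\varphi(q)}\sum_{\chi\ne\chi_0}\overline{\chi(a)}S_\chi(x)^r$. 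Since each $S_\chi(x)$ with $\chi\ne\chi_0$ is a bounded (indeed $q$-periodic) function of $x$, this quantity is automatically independent of $x$, which explains the shape of the error term. However, inserting the P\'olya--Vinogradov bound $S_\chi(x)\ll q^{1/2}\log q$ into all $r$ factors would only yield $O(q^{r/2+\varepsilon})$, so one extra power $q^{1/2}$ of cancellation must be extracted.

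The key step, and the place where Kloosterman sums enter, is to \emph{linearise} the character sums rather than to bound them one by one. Expanding each factor through the finite Fourier expansion of a primitive character, $\chi(t)=\tau(\bar\chi)^{-1}\sum_{h\bmod q}\bar\chi(h)e_q(th)$ where $e_q(u)=\mathrm{e}^{2\pi\mathrm{i}u/q}$ and $\tau(\chi)$ is the Gauss sum, one writes $S_\chi(x)=\tau(\bar\chi)^{-1}\sum_{1\le h\le q-1}\bar\chi(h)\beta_h$ with $\beta_h=\sum_{1\le t\le x}e_q(th)$ (the term $h=0$ drops out since $\chi\ne\chi_0$). Substituting this into all $r$ factors, using $\tau(\bar\chi)^{-1}=\chi(-1)\tau(\chi)q^{-1}$ together with $\tau(\chi)^r=\sum_{b_1,\dots,b_r}\chi(b_1\cdots b_r)e_q(b_1+\cdots+b_r)$, the sum over $\chi$ collapses by orthogonality to the congruence $b_1\cdots b_r\equiv(-1)^r a\,h_1\cdots h_r\imod q$. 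The resulting complete sum over the $b_i$ is then a Kloosterman sum: for $r=2$ it is a classical Kloosterman sum, to which Weil's bound $\ll q^{1/2+\varepsilon}\gcd(c,q)^{1/2}$ applies, and for $r\ge3$ it is the corresponding higher-dimensional (hyper-)Kloosterman sum, for which the analogous estimate $\ll q^{(r-1)/2+\varepsilon}$ holds. This is precisely the saving of $q^{(r-1)/2}$ that I am after, and it is the reason the argument costs no more effort for general $r$ than for $r=2$.

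It then remains to sum the weights. Bounding $\beta_h\ll\min(x,\|h/q\|^{-1})$ gives $\sum_{1\le h\le q-1}|\beta_h|\ll q\log q$, so the $r$-fold product of weights contributes $\ll(q\log q)^r$; after dividing by the factor $q^r$ coming from the Gauss sums and by $\varphi(q)$, one recovers the bound $O(q^{(r-1)/2+\varepsilon})$. I expect the main obstacle to be technical rather than conceptual: the Gauss-sum expansion is valid only for primitive characters, so one must first sort the characters $\chi\bmod q$ according to their conductor $d\mid q$ and run the argument with primitive characters modulo $d$, and one must control the dependence on $\gcd(\cdot,q)$ in Weil's bound uniformly once it is summed against the weights $\beta_h$. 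Checking that these manipulations are genuinely uniform in $x$, so that no spurious $x$-dependence creeps into the error term, is the delicate bookkeeping on which the whole estimate rests.
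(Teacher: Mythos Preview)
Your approach is sound in principle and reaches the same bound, but it takes a different and somewhat longer route than the paper. The paper detects the congruence with \emph{additive} characters from the outset: writing
\[
\mathcal{S}_r(x;q,a)=q^{-r}\sum_{\ell_1,\dots,\ell_r=1}^{q}K(\ell_1,\dots,\ell_r a;q)\,F_q(x;\ell_1,\dots,\ell_r),
\]
the hyper-Kloosterman sums $K$ appear immediately, with no Gauss-sum conversion and hence no primitivity issues at all. The terms with at least one $\ell_i=q$ form a quantity $\mathcal{S}_r^{\ast}(x;q)$ that is visibly independent of $a$; the remaining terms are bounded by the Weil--Deligne estimate together with the geometric-sum bound $F_q\ll\prod_i\|\ell_i/q\|^{-1}$, exactly the weights you use. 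Rather than compute $\mathcal{S}_r^{\ast}(x;q)$ explicitly, the paper then simply \emph{averages} the resulting estimate over $a$ coprime to $q$, which forces $\mathcal{S}_r^{\ast}(x;q)$ to coincide with the claimed main term up to $O(q^{(r-1)/2+\varepsilon})$ --- a neat trick that replaces your direct $\chi_0$ identification. Your multiplicative-character route has the conceptual advantage that the main term drops out for free from $\chi=\chi_0$, but the price is precisely the conductor bookkeeping you flag: sorting $\chi$ by conductor $d\mid q$, writing $S_\chi$ as a M\"obius combination of $S_{\chi^{\ast}}(x/m)$ for the primitive $\chi^{\ast}$ modulo $d$, and then running the Gauss-sum/Kloosterman argument modulo each $d$ is feasible but adds a layer of inclusion--exclusion that the additive approach sidesteps entirely. (One small slip: the error term $\varphi(q)^{-1}\sum_{\chi\ne\chi_0}\overline{\chi(a)}S_\chi(x)^r$ is $q$-periodic and uniformly bounded in $x$, but it is not ``independent of $x$''.)
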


\begin{proof}
We set $e_q(t) = e^{2 i \pi t /q}$. We reduce the variables $t_1, \dots, t_r$ to their residue classes modulo $q$ and we detect the congruences using sums of exponentials. We obtain
\begin{eqnarray}
\notag
\mathcal{S}_r(x;q,a) & = & \sum_{1 \leq t_1, \dots, t_r \leq x}
\sum_{\substack{\alpha_1, \dots, \alpha_r = 1 \\ \alpha_1 \cdots \alpha_r \equiv a \imod{q} }}^q
\prod_{i=1}^r \frac1{q} \sum_{\ell_i = 1}^q e_q(\ell_i(\alpha_i - t_i)) \\
\label{S}
& = & \frac1{q^r} \sum_{\ell_1, \dots, \ell_r = 1}^q K(\ell_1, \dots, \ell_r a, q) F_q(x;\ell_1, \dots, \ell_r) \textrm{,}
\end{eqnarray}
where $K(\ell_1, \dots, \ell_r a, q)$ is the $(r-1)$-dimensional Kloosterman sum given by
\begin{eqnarray*}
K(\ell_1, \dots, \ell_r a, q) & = & \sum_{\substack{\alpha_1, \dots, \alpha_r = 1 \\ \alpha_1 \cdots \alpha_r \equiv a \imod{q}}}^q
e_q ( \ell_1 \alpha_1 + \cdots + \ell_r \alpha_r ) \\
& = & \sum_{\substack{\alpha_1, \dots, \alpha_{r-1} = 1 \\ \gcd(\alpha_1 \cdots \alpha_{r-1},q) = 1}}^q
e_q \left( \ell_1 \alpha_1 + \cdots + \ell_{r-1} \alpha_{r-1} + \ell_r a \alpha_1^{-1} \cdots \alpha_{r-1}^{-1} \right) \textrm{,}
\end{eqnarray*}
where $\alpha^{-1}$ denotes the inverse of $\alpha$ modulo $q$ and where $F_q(x;\ell_1, \dots, \ell_r)$ is defined by
\begin{eqnarray*}
F_q(x;\ell_1, \dots, \ell_r) & = & \prod_{i=1}^r \ \sum_{1 \leq t_i \leq x} e_q( - \ell_i t_i ) \textrm{.}
\end{eqnarray*}
We use Weinstein's version of the works of Weil \cite{MR0027151} and Deligne \cite{MR0340258} (see
\cite[Theorems $1$ and $2$]{MR630958} and note that Smith has obtained similar results in \cite{MR544261}), namely
\begin{eqnarray*}
\left| K(\ell_1, \dots, \ell_r a, q) \right| & \leq & t_q r^{\omega(q)} q^{(r-1)/2} \prod_{j=1}^{r-1} \gcd(\ell_j,\ell_r,q)^{1/2} \textrm{,}
\end{eqnarray*}
where $t_q = 1$ if $q$ is odd and $t_q = 2^{(r + 1)/2}$ if $q$ is even and where $\omega(q)$ denotes the number of prime factors of $q$. Therefore, writing $t_q r^{\omega(q)} \ll q^{\varepsilon}$ where, as explained in the introduction, the constant involved is allowed to depend on $r$ and $\varepsilon$, and noticing that $\gcd(\ell_j,\ell_r,q)^{1/2} \leq \gcd(\ell_j,q)^{1/2}$, we get
\begin{eqnarray}
\label{K}
K(\ell_1, \dots, \ell_r a, q) & \ll & q^{(r-1)/2 + \varepsilon} \prod_{j=1}^{r-1} \gcd(\ell_j,q)^{1/2} \textrm{.}
\end{eqnarray}
We denote by $||x||$ the distance from $x$ to the set of integers. Note that if $\ell_i \neq q$ for all $i = 1, \dots, r$ then
$F_q(x;\ell_1, \dots, \ell_r)$ is a product of $r$ geometric sums and thus we have the bound
\begin{eqnarray}
\label{F}
F_q(x;\ell_1, \dots, \ell_r) & \ll & \prod_{i=1}^r \left| \left| \frac{\ell_i}{q} \right| \right|^{-1} \textrm{.}
\end{eqnarray}
Let $\mathcal{S}_r^{\ast}(x;q)$ be the sum of the terms of the expression \eqref{S} for which at least one of the $\ell_i$ is equal to $q$. This quantity is easily seen to be independent of $a$. The bound \eqref{K} for $K(\ell_1, \dots, \ell_r a, q)$ together with the bound \eqref{F} for $F_q(x;\ell_1, \dots, \ell_r)$ prove that
\begin{eqnarray*}
\mathcal{S}_r(x;q,a) - \mathcal{S}_r^{\ast}(x;q) & \ll & \frac1{q^r} q^{(r-1)/2 + \varepsilon}
\sum_{\ell_1, \dots, \ell_r = 1}^{q-1} \ \prod_{j=1}^{r-1} \gcd(\ell_j,q)^{1/2}
\prod_{i=1}^r \left| \left| \frac{\ell_i}{q} \right| \right|^{-1} \\
& \ll & \frac{q^{\varepsilon}}{q^{(r+1)/2}} \sum_{0 < |\ell_1|, \dots, |\ell_r | \leq q/2} \
\prod_{j=1}^{r-1} \gcd(\ell_j,q)^{1/2} \prod_{i=1}^r \frac{q}{\ell_i} \\
& \ll & q^{(r-1)/2 + \varepsilon} \sum_{d_1, \dots, d_{r-1} | q} \ 
\sum_{\substack{0 < |\ell_1 |, \dots, |\ell_r | \leq q/2 \\ d_1 | \ell_1, \dots , d_{r-1} | \ell_{r-1}}} \ \prod_{j=1}^{r-1} d_j^{1/2} \prod_{i=1}^r \frac1{\ell_i} \\
& \ll & q^{(r-1)/2 + \varepsilon} \log(q)^r \sum_{d_1, \dots, d_{r-1} | q} \ \prod_{j=1}^{r-1} d_j^{-1/2}  \textrm{,}
\end{eqnarray*}
and thus, after rescaling $\varepsilon$, we get
\begin{eqnarray}
\label{estimate}
\mathcal{S}_r(x;q,a) - \mathcal{S}_r^{\ast}(x;q) & \ll & q^{(r-1)/2 + \varepsilon} \textrm{.}
\end{eqnarray}
Recall that $\mathcal{S}_r^{\ast}(x;q)$ is independent of $a$. Averaging the estimate \eqref{estimate} over $a$ coprime to $q$ therefore proves that
\begin{eqnarray*}
\frac1{\varphi(q)} \sum_{\substack{1 \leq t_1, \dots, t_r \leq x \\ \gcd(t_1 \cdots t_r,q) = 1}} 1 - \mathcal{S}_r^{\ast}(x;q) & \ll & q^{(r-1)/2 + \varepsilon} \textrm{,}
\end{eqnarray*}
which completes the proof.
\end{proof}

We now state the result which will be used in the case $r = 3$.

\begin{lemma}
\label{preliminary 3}
Let $\varepsilon > 0$ be fixed. For $a, q \geq 1$ two coprime integers and $x \geq 1$ such that $q \leq x^{12/7}$, we have the estimate
\begin{eqnarray*}
\mathcal{S}_3(x;q,a) & = & \frac1{\varphi(q)} \sum_{\substack{1 \leq t_1, t_2, t_3 \leq x \\ \gcd(t_1 t_2 t_3,q) = 1}} 1
+ O \left( q^{1/4} x^{1 + \varepsilon} \right) \textrm{.}
\end{eqnarray*}
\end{lemma}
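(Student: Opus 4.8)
The plan is to recognise $\mathcal{S}_3(x;q,a)$ as a sum of the ternary divisor function $\tau_3$ over the arithmetic progression $a \imod q$, and to feed it into the equidistribution estimate of Heath-Brown \cite{MR866901}. Writing $n = t_1 t_2 t_3$, the quantity $\mathcal{S}_3(x;q,a)$ counts the factorisations $n = t_1 t_2 t_3$ with each factor in $[1,x]$ and with $n \equiv a \imod q$; since $\gcd(a,q)=1$ forces $\gcd(t_1 t_2 t_3,q)=1$, the proposed main term $\frac1{\varphi(q)} \sum_{1 \le t_1,t_2,t_3 \le x,\ \gcd(t_1 t_2 t_3,q)=1} 1$ is exactly the value one predicts by distributing these $\tau_3$-weighted integers equally among the $\varphi(q)$ invertible residue classes. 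The task is therefore to bound the discrepancy between $\mathcal{S}_3(x;q,a)$ and this prediction, and the whole quality of the estimate will be inherited from Heath-Brown's level of distribution for $\tau_3$.

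Before applying his theorem one must convert the box constraint $\max_i t_i \le x$ into a constraint on the product $n = t_1 t_2 t_3$, since the equidistribution result concerns $\sum_{n \le y,\ n \equiv a} \tau_3(n)$ over a single interval. I would do this by a hyperbola-type dyadic decomposition, splitting $[1,x]^3$ into $O(\log^3 x)$ ranges $t_i \asymp T_i$ with $T_i \le x$, each of which is a sum of $\tau_3$ over an interval of length $\asymp T_1 T_2 T_3 \le x^3$ in the progression $a \imod q$. On the generic, balanced ranges $T_1 \asymp T_2 \asymp T_3 \asymp x$, which carry the bulk of the mass, the product length is $\asymp x^3$, and Heath-Brown's theorem applies precisely when $q \le (x^3)^{4/7} = x^{12/7}$, producing an error of size $q^{1/4} (x^3)^{1/3+\varepsilon} = q^{1/4} x^{1+\varepsilon}$. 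This balanced box is the binding one, and it is the source of both the hypothesis $q \le x^{12/7}$ and the shape of the error term; summing the Heath-Brown errors over the dyadic ranges with $T_1 T_2 T_3 \ge q^{7/4}$ is a geometric sum dominated by this extreme box and stays $\ll q^{1/4} x^{1+\varepsilon}$.

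It remains to dispose of the ranges where $T_1 T_2 T_3 < q^{7/4}$, for which Heath-Brown's hypothesis fails. Here I would simply bound both the actual count and its prediction crudely: each is $\ll (T_1 T_2 T_3 / q + 1) x^{\varepsilon}$, so their difference is too, and summing over the $O(\log^3 x)$ such ranges gives $\ll q^{3/4} x^{\varepsilon}$, which is $\ll q^{1/4} x^{1+\varepsilon}$ because $q \le x^2$ throughout the stated range. Finally, the main terms produced by all the dyadic pieces reassemble, via the very same decomposition applied to the coprime count, into $\frac1{\varphi(q)} \sum_{1 \le t_1,t_2,t_3 \le x,\ \gcd(t_1 t_2 t_3,q)=1} 1$, as asserted. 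I expect the main obstacle to be this faithful reduction of the three-variable box to the single-interval form in which Heath-Brown's theorem is stated: one must verify that the decomposition and the reassembly of the main terms introduce no spurious secondary terms and that, once the bookkeeping over the $O(\log^3 x)$ dyadic ranges is absorbed into $x^{\varepsilon}$, the only binding contribution is the balanced box $T_i \asymp x$. All the quantitative strength then comes from the level of distribution $4/7$ for $\tau_3$, with Lemma \ref{preliminary} at $r=2$ available, if desired, as a fallback for any range in which a single variable is pinned.
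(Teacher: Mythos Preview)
Your overall architecture---dyadically decompose $[1,x]^3$, apply Heath-Brown on each box, then average over $a$ coprime to $q$ to pin down the main term---matches the paper's. The gap is in the step where you assert that each dyadic box $t_i \asymp T_i$ ``is a sum of $\tau_3$ over an interval of length $\asymp T_1 T_2 T_3$''. This is false: the box sum counts only those factorisations $n=t_1t_2t_3$ with each $t_i$ in its prescribed dyadic range, which is a \emph{restricted} divisor count, not $\tau_3(n)$. Heath-Brown's main theorem on $\sum_{n\le N,\ n\equiv a}\tau_3(n)$ therefore does not apply to these pieces, and in any case the numerology you quote (level of distribution $4/7$, error $q^{1/4}N^{1/3+\varepsilon}$) is not the content of that theorem; the fact that $12/7=3\cdot 4/7$ is a coincidence, not the source of the hypothesis.

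The paper sidesteps this reduction entirely by quoting \cite[Lemma~5]{MR866901}, which is already stated for the three-variable \emph{box} count $S(U_1,U_2,U_3;q,a)$ and furnishes a main term $M(U_1,U_2,U_3;q)$ independent of $a$ with error $\ll x^{\varepsilon}\bigl(q^{5/6}+q^{1/4}x+q^{1/2}x^{1/2}\bigr)$ whenever $U_i\le x$. One chooses the multiplicative step $1+\delta$ so that the boxes tile $(1,x]$ exactly, sums over the $O((\log x)^3)$ boxes, and then averages over $a$ coprime to $q$ exactly as you propose in your final paragraph. The condition $q\le x^{12/7}$ is simply the range in which $q^{1/4}x$ dominates the other two error terms $q^{5/6}$ and $q^{1/2}x^{1/2}$, so no separate handling of small-product boxes is needed either.
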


\begin{proof}
Let $0 < \delta \leq 1$ and $U_1$, $U_2$ and $U_3$ be variables running over the set
$\left\{ (1 + \delta)^n , n \in \mathbb{Z}_{\geq -1} \right\}$. Let us introduce the quantity
\begin{eqnarray*}
S(U_1,U_2,U_3;q,a) & = &  \#
\left\{ (t_1, t_2, t_3) \in \mathbb{Z}^3,
\begin{array}{l}
U_i < t_i \leq (1 + \delta) U_i, i \in \{1, 2, 3 \} \\
t_1 t_2 t_3 \equiv a \imod{q}
\end{array}
\right\} \textrm{.}
\end{eqnarray*}
By \cite[Lemma $5$]{MR866901}, there exists a quantity $M(U_1,U_2,U_3;q)$ independent of $a$ such that, for $U_1,U_2,U_3 \leq x$,
\begin{eqnarray}
\label{equidistribution}
\ \ \ \ \ S(U_1,U_2,U_3;q,a) - M(U_1,U_2,U_3;q) & \ll & x^{\varepsilon} \left( q^{5/6} + q^{1/4} x + q^{1/2} x^{1/2} \right) \textrm{.}
\end{eqnarray}
Let $f = \lfloor \log(x) / \log(2) \rfloor$ and let us choose $\delta = x^{1/(f+1)} - 1$. We have the equality
\begin{eqnarray*}
\mathcal{S}_3(x;q,a) & = & \sum_{U_1,U_2,U_3 \leq x} S(U_1,U_2,U_3;q,a) \textrm{.}
\end{eqnarray*}
The estimate \eqref{equidistribution} therefore shows that there exists a quantity $\mathcal{S}_3^{\ast}(x;q)$ independent of $a$ such that
\begin{eqnarray*}
\mathcal{S}_3(x;q,a) - \mathcal{S}_3^{\ast}(x;q) & \ll & x^{\varepsilon} \left( q^{5/6} + q^{1/4} x + q^{1/2} x^{1/2} \right) \textrm{.}
\end{eqnarray*}
For $q \leq x^{12/7}$, the error term $q^{1/4} x^{1 + \varepsilon}$ dominates and since $\mathcal{S}_3^{\ast}(x;q)$ is independent of~$a$, averaging this estimate over $a$ coprime to $q$ immediately concludes the proof.
\end{proof}

For $r \geq 4$, we will use another estimate for $\mathcal{S}_r(x;q,a)$. This estimate is due to Shparlinski
\cite[Theorem $9$]{MR2303621} and essentially draws upon Burgess bounds for sums of multiplicative characters (see
\cite[Theorem $2$]{MR0148626} and \cite[Theorem A]{MR838632}). Note that in \cite[Theorem $9$]{MR2303621}, the result is stated with the condition $x \leq q$ but it is easy to see that it remains true without this restriction.

\begin{lemma}
\label{preliminary'}
Let $r \geq 4$, $s \in \{ 2, 3 \}$ and $\varepsilon > 0$ be fixed. For $a, q \geq 1$ two coprime integers and $x \geq 1$, we have the estimate
\begin{eqnarray*}
\mathcal{S}_r(x;q,a) & = & \frac1{\varphi(q)} \sum_{\substack{1 \leq t_1, \dots, t_r \leq x \\ \gcd(t_1 \cdots t_r,q) = 1}} 1 + O \left( x^{\alpha_{r,s}} q^{\beta_{r,s} + \varepsilon} \right) \textrm{,}
\end{eqnarray*}
where $\alpha_{r,s}$ and $\beta_{r,s}$ are given by
\begin{eqnarray*}
\alpha_{r,s} & = & r - \frac{r - 4 + 2 s}{s} \textrm{,} \\
\end{eqnarray*}
and
\begin{eqnarray*}
\beta_{r,s} & = & \frac{(r-4)(s+1)}{4s^2} \textrm{.}
\end{eqnarray*}
\end{lemma}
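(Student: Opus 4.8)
The statement to prove is Lemma \ref{preliminary'}, which gives an equidistribution estimate for $\mathcal{S}_r(x;q,a)$ for $r \geq 4$ attributed to Shparlinski. Since the paper explicitly cites \cite[Theorem $9$]{MR2303621} as the source, the real content of the proof is twofold: first, to recall the precise form of Shparlinski's theorem and specialize it to our setting; and second, to justify the remark made just before the statement, namely that Shparlinski's result, although stated under the hypothesis $x \leq q$, continues to hold without this restriction.

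Let me think about the structure. Shparlinski's Theorem $9$ bounds the discrepancy between the number of solutions to $t_1 \cdots t_r \equiv a \imod{q}$ in a box and the expected main term $\varphi(q)^{-1}$ times the count of tuples coprime to $q$; the bound takes the shape $x^{\alpha_{r,s}} q^{\beta_{r,s} + \varepsilon}$, where the parameter $s \in \{2,3\}$ records which Burgess exponent is being exploited. The exponents $\alpha_{r,s}$ and $\beta_{r,s}$ displayed in the lemma should match, after a direct translation of notation, the exponents appearing in Shparlinski's formulation. Thus the first step is simply bookkeeping: align the variables and verify that the stated $\alpha_{r,s} = r - (r-4+2s)/s$ and $\beta_{r,s} = (r-4)(s+1)/(4s^2)$ are exactly what one reads off from \cite[Theorem $9$]{MR2303621}.

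The genuinely non-trivial step is removing the hypothesis $x \leq q$, which I expect to be the main obstacle. The idea here is a standard reduction to residues. When $x > q$, I would split each interval $[1,x]$ into $\lfloor x/q \rfloor$ complete residue systems modulo $q$ plus a leftover interval of length less than $q$. Since the condition $t_1 \cdots t_r \equiv a \imod{q}$ depends only on the residues $t_i \bmod q$, expanding the product over the $r$ coordinates writes $\mathcal{S}_r(x;q,a)$ as a combination of counts over sub-boxes, each of side length at most $q$, to which the restricted form of Shparlinski's theorem applies directly. The complete residue systems contribute exactly the main term (for each full block the count is $\varphi(q)^{r-1}$, matching $\varphi(q)^{-1}$ times the coprime count), while the error terms from the genuinely incomplete pieces accumulate. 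The point to check is that summing the error $x^{\alpha_{r,s}} q^{\beta_{r,s}+\varepsilon}$ over these $O((x/q+1)^r)$ sub-boxes, each of which now has side $\leq q$, reproduces the claimed global bound; because $\alpha_{r,s} < r$ the per-block errors interact favorably with the number of blocks, and the arithmetic works out precisely because the exponents were calibrated for the case of a box of side $q$.

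To carry this out cleanly, I would first dispose of the trivial regime. When $q$ is large relative to $x$ the error term $x^{\alpha_{r,s}} q^{\beta_{r,s}}$ already dominates $\mathcal{S}_r(x;q,a)$ itself (which is trivially at most $x^r$ plus the main term), so the estimate holds for free; this lets me assume $q$ not too large and run the residue decomposition described above in the complementary range. The remaining work is then the routine verification that the main terms coming from the complete blocks assemble into $\varphi(q)^{-1} \sum_{\gcd(t_1\cdots t_r,q)=1} 1$ with the incomplete blocks absorbed into the error. I do not anticipate any conceptual difficulty beyond organizing this decomposition carefully, since the deep input, namely the Burgess-type cancellation in sums of multiplicative characters, is entirely encapsulated in the cited theorem.
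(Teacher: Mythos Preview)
Your proposal is correct and matches the paper's treatment: the paper gives no proof of this lemma at all, merely citing \cite[Theorem~9]{MR2303621} and remarking that the hypothesis $x\le q$ ``is easy'' to remove. Your sketch of that removal is in the right direction but overcomplicates the bookkeeping. Writing $x=Mq+x'$ with $0\le x'<q$ and expanding each coordinate into $M$ complete blocks plus the leftover, every sub-box in which \emph{at least one} coordinate runs over a full residue system modulo $q$ contributes identically to $\mathcal{S}_r(x;q,a)$ and to the main term $\varphi(q)^{-1}\bigl(\#\{t\le x:\gcd(t,q)=1\}\bigr)^r$; hence the discrepancy for $[1,x]^r$ is literally equal to the discrepancy for the single all-incomplete cube of side $x'<q$, and one application of Shparlinski's bound finishes. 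There is no accumulation of errors over $O((x/q+1)^r)$ pieces to worry about, and in particular the claim that ``the arithmetic works out'' for such a sum is both unnecessary and, taken at face value, false (naively summing $q^{\alpha_{r,s}+\beta_{r,s}+\varepsilon}$ over $(x/q)^r$ boxes would give a bound that is too weak precisely when $x>q$).
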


\section{Proof of theorem \ref{Main}}

Let $k \geq 2$ be an integer. The Möbius function of order $k$ is defined by setting $\mu_k(1) = 1$ and, for $p$ a prime number and $\ell$ a positive integer,
\begin{eqnarray*}
\mu_k \left( p^{\ell} \right) & = &
\begin{cases}
1 & \textrm{ if } \ell \leq k - 2 \textrm{,} \\
-1 & \textrm{ if } \ell = k - 1 \textrm{,} \\
0 & \textrm{ otherwise,}
\end{cases}
\end{eqnarray*}
with the value of $\mu_k$ at any integer defined by multiplicativity. Note that $\mu_2$ is the usual Möbius function $\mu$. By construction, $|\mu_k|$ is the characteristic function of the set of $k$-free integers. The following elementary identity (see \cite[Lemma 5]{MR0253999}) is the starting point of our proof. For any integer
$n \geq 1$,
\begin{eqnarray*}
|\mu_k(n)| & = & \sum_{d^k|n} \mu(d) \textrm{.}
\end{eqnarray*}
We thus obtain
\begin{eqnarray*}
\mathcal{N}_{k,r}(x) & = & \sum_{\substack{1 \leq t_1, \dots, t_r \leq x \\ t_1 \cdots t_r \neq 1}} |\mu_k(t_1 \cdots t_r - 1)| \\
& = & \sum_{\substack{1 \leq t_1, \dots, t_r \leq x \\ t_1 \cdots t_r \neq 1}} \ \sum_{d^k | t_1 \cdots t_r - 1} \mu(d) \\
& = & \sum_{1 \leq d < x^{r/k}} \mu(d)
\sum_{\substack{1 \leq t_1, \dots, t_r \leq x \\ t_1 \cdots t_r \equiv 1 \imod{d^k} \\ t_1 \cdots t_r \neq 1}} 1 \textrm{.}
\end{eqnarray*}
Let $1 \leq y < x^{r/k}$ be a parameter to be specified later. We write $\mathcal{N}_{k,r}'(x)$ and $\mathcal{N}_{k,r}''(x)$ respectively for the contributions coming from the sums over $d$ for $1 \leq d \leq y$ and
$y < d < x^{r/k}$. It turns out that the contribution of $\mathcal{N}_{k,r}''(x)$ is negligible and we therefore start by proving an upper bound for $\mathcal{N}_{k,r}''(x)$. Denoting by $\tau_r$ the Dirichlet convolution of the constant arithmetic function equal to $1$ by itself $r$ times and using the elementary bound $\tau_r(n) \ll n^{\varepsilon}$, we easily obtain
\begin{eqnarray*}
\mathcal{N}''_{k,r}(x) & = & \sum_{y < d < x^{r/k}} \mu(d)
\sum_{\substack{1 \leq t_1, \dots, t_r \leq x \\ t_1 \cdots t_r \equiv 1 \imod{d^k} \\ t_1 \cdots t_r \neq 1}} 1 \\
& \leq & \sum_{y < d < x^{r/k}} \sum_{\substack{1 \leq n \leq x^r \\ n \equiv 1 \imod{d^k}}} \tau_r(n) \\
& \ll & x^{\varepsilon} \sum_{y < d < x^{r/k}} \sum_{\substack{1 \leq n \leq x^r \\ n \equiv 1 \imod{d^k}}} 1 \textrm{.}
\end{eqnarray*}
Since $d < x^{r/k}$, the inner sum is bounded by $2 x^r/d^k$. This yields
\begin{eqnarray}
\notag
\mathcal{N}''_{k,r}(x) & \ll & x^{r + \varepsilon} \sum_{y < d < x^{r/k}} \frac1{d^k} \\
\label{N''}
& \ll & \frac{x^{r + \varepsilon}}{y^{k-1}} \textrm{.}
\end{eqnarray}
We now turn to the estimation of $\mathcal{N}_{k,r}'(x)$. Using lemma \ref{preliminary}, we get
\begin{eqnarray*}
\mathcal{N}'_{k,2}(x) & = & \sum_{1 \leq d \leq y} \frac{\mu(d)}{\varphi(d^k)}
\sum_{\substack{1 \leq t_1, t_2 \leq x \\ \gcd(t_1 t_2, d) = 1}} 1
+ O \left( \sum_{1 \leq d \leq y} d^{k/2 + \varepsilon} \right) \\
& = & \sum_{1 \leq d \leq y} \frac{\mu(d)}{\varphi(d^k)}
\sum_{\substack{1 \leq t_1, t_2 \leq x \\ \gcd(t_1 t_2, d) = 1}} 1
+ O \left( x^{\varepsilon} y^{k/2+1} \right) \textrm{.}
\end{eqnarray*}
Using lemma \ref{preliminary 3}, we get, for $y \leq x^{12/7k}$,
\begin{eqnarray*}
\mathcal{N}'_{k,3}(x) & = & \sum_{1 \leq d \leq y} \frac{\mu(d)}{\varphi(d^k)}
\sum_{\substack{1 \leq t_1, t_2, t_3 \leq x \\ \gcd(t_1 t_2 t_3, d) = 1}} 1
+ O \left( x^{1 + \varepsilon} \sum_{1 \leq d \leq y} d^{k/4} \right) \\
& = & \sum_{1 \leq d \leq y} \frac{\mu(d)}{\varphi(d^k)}
\sum_{\substack{1 \leq t_1, t_2, t_3 \leq x \\ \gcd(t_1 t_2 t_3, d) = 1}} 1
+ O \left( x^{1 + \varepsilon} y^{k/4 + 1} \right) \textrm{.}
\end{eqnarray*}
Finally, using lemma \ref{preliminary'}, we get, for any $r \geq 4$ and $s \in \{2, 3 \}$, 
\begin{eqnarray*}
\mathcal{N}'_{k,r}(x) & = & \sum_{1 \leq d \leq y} \frac{\mu(d)}{\varphi(d^k)}
\sum_{\substack{1 \leq t_1, \dots, t_r \leq x \\ \gcd(t_1 \cdots t_r, d) = 1}} 1
+ O \left(  x^{\alpha_{r,s}} \sum_{1 \leq d \leq y} d^{k \beta_{r,s} + \varepsilon} \right) \\
& = & \sum_{1 \leq d \leq y} \frac{\mu(d)}{\varphi(d^k)}
\sum_{\substack{1 \leq t_1, \dots, t_r \leq x \\ \gcd(t_1 \cdots t_r, d) = 1}} 1
+ O \left( x^{\alpha_{r,s} + \varepsilon} y^{k \beta_{r,s} + 1} \right) \textrm{.}
\end{eqnarray*}
In addition, a Möbius inversion yields
\begin{eqnarray*}
\sum_{\substack{1 \leq t \leq x \\ \gcd(t,d) = 1}} 1 & = &  \sum_{m|d} \mu(m) \left\lfloor \frac{x}{m} \right\rfloor \\
& = & \frac{\varphi(d)}{d} x - \sum_{m|d} \mu(m) \left\{ \frac{x}{m} \right\} \textrm{.}
\end{eqnarray*}
This equality plainly gives
\begin{eqnarray*}
\sum_{\substack{1 \leq t_1, \dots, t_r \leq x \\ \gcd(t_1 \cdots t_r,d) = 1}} 1 & = &
\left( \frac{\varphi(d)}{d} \right)^r x^r
- r \left( \frac{\varphi(d)}{d} \right)^{r-1} \left( \sum_{m|d} \mu(m) \left\{ \frac{x}{m} \right\} \right) x^{r-1} \\
& & + \frac{r(r-1)}{2} \left( \frac{\varphi(d)}{d} \right)^{r-2} \left( \sum_{m|d} \mu(m) \left\{ \frac{x}{m} \right\} \right)^2 x^{r-2} + O \left( d^{\varepsilon} x^{r-3} \right) \textrm{.}
\end{eqnarray*}
Furthermore,
\begin{eqnarray*}
x^r \sum_{1 \leq d \leq y} \frac{\mu(d)}{\varphi(d^k)} \left( \frac{\varphi(d)}{d} \right)^r & = &
x^r \sum_{d = 1}^{+ \infty} \frac{\mu(d)}{\varphi(d^k)} \left( \frac{\varphi(d)}{d} \right)^r
+ O \left( \frac{x^{r}}{y^{k-1}} \right) \\
& = & c_{k,r} x^r + O \left( \frac{x^{r}}{y^{k-1}} \right) \textrm{.}
\end{eqnarray*}
Doing the same thing for the second and the third terms and setting
\begin{eqnarray*}
\mathcal{M}_{k,r}(x) & = & c_{k,r} x^r - \theta_{k,r}^{(1)}(x) x^{r-1} + \theta_{k,r}^{(2)}(x) x^{r-2} \textrm{,}
\end{eqnarray*}
we easily find that 
\begin{eqnarray*}
\sum_{1 \leq d \leq y} \frac{\mu(d)}{\varphi(d^k)}
\sum_{\substack{1 \leq t_1, \dots, t_r \leq x \\ \gcd(t_1 \cdots t_r, d) = 1}} 1 - \mathcal{M}_{k,r}(x) & \ll &
x^{r-3} + \frac{x^{r}}{y^{k-1}} \textrm{.}
\end{eqnarray*}
Our investigation has led us to the conclusion that
\begin{eqnarray*}
\mathcal{N}_{k,2}'(x) - \mathcal{M}_{k,2}(x) & \ll & x^{\varepsilon} y^{k/2+1} + \frac{x^2}{y^{k-1}} \textrm{,}
\end{eqnarray*}
and, for $y \leq x^{12/7k}$,
\begin{eqnarray*}
\mathcal{N}_{k,3}'(x) - \mathcal{M}_{k,3}(x) & \ll & x^{1 + \varepsilon} y^{k/4 + 1} + \frac{x^3}{y^{k-1}} \textrm{,}
\end{eqnarray*}
and, for any $r \geq 4$ and $s \in \{ 2, 3 \}$,
\begin{eqnarray*}
\mathcal{N}_{k,r}'(x) - \mathcal{M}_{k,r}(x) & \ll &
x^{\alpha_{r,s} + \varepsilon} y^{k \beta_{r,s} + 1} +  x^{r-3} + \frac{x^{r}}{y^{k-1}} \textrm{.}
\end{eqnarray*}
Recalling that $\mathcal{N}_{k,r}(x) = \mathcal{N}'_{k,r}(x) + \mathcal{N}''_{k,r}(x)$ and the bound \eqref{N''} for $\mathcal{N}''_{k,r}(x)$, we finally get
\begin{eqnarray*}
\mathcal{N}_{k,2}(x) - \mathcal{M}_{k,2}(x) & \ll & x^{\varepsilon} y^{k/2+1} + \frac{x^{2 + \varepsilon}}{y^{k-1}} \textrm{,}
\end{eqnarray*}
and, for $y \leq x^{12/7k}$,
\begin{eqnarray*}
\mathcal{N}_{k,3}(x) - \mathcal{M}_{k,3}(x) & \ll & x^{1 + \varepsilon} y^{k/4 + 1} + \frac{x^{3 + \varepsilon}}{y^{k-1}} \textrm{,}
\end{eqnarray*}
and, for any $r \geq 4$ and $s \in \{ 2, 3 \}$,
\begin{eqnarray*}
\mathcal{N}_{k,r}(x) - \mathcal{M}_{k,r}(x) & \ll &
x^{\alpha_{r,s} + \varepsilon} y^{k \beta_{r,s} + 1} + x^{r-3} + \frac{x^{r + \varepsilon}}{y^{k-1}} \textrm{.}
\end{eqnarray*}
We can now choose $y$ to our best advantage. We instantly see that for $r = 2$ the optimal value is $y = x^{4/3k}$ and for $r = 3$ the best choice is $y = x^{8/5k}$, which satisfies $y \leq x^{12/7k}$. Furthermore, for $r \geq 4$ the optimal value is
$y = x^{8r/k(3r+4)}$ if $s = 2$ and $y = x^{3(r+2)/k(r+5)}$ if $s = 3$. It is easy to see that if $r = 4$ or $r = 6$ then the two choices for $s$ yield the same result, for $r = 5$ it is better to choose $s = 2$ and for $r > 6$ it is better to choose $s = 3$. Finally, we can immediately check that in each case we obtain the result claimed.

\bibliographystyle{is-alpha}
\bibliography{biblio}

\end{document}